\DeclareMathOperator{\Min}{Min}
\newtheorem{theorem}{Theorem}[section]
\newtheorem{definition}[theorem]{Definition}
\newtheorem{lemma}[theorem]{Lemma}
\newtheorem{proposition}[theorem]{Proposition}
\newtheorem{example}[theorem]{Example}
\newtheorem{corollary}[theorem]{Corollary}
\title{Orthomodular and generalized orthomodular posets}
\author{Ivan~Chajda, Miroslav~Kola\v r\'ik and Helmut~L\"anger}
\date{}
\begin{document}

\footnotetext{Support of the research of the first and third author by the Austrian Science Fund (FWF), project I~4579-N, and the Czech Science Foundation (GA\v CR), project 20-09869L, entitled ``The many facets of orthomodularity'', is gratefully acknowledged.}

\maketitle

\begin{abstract}
We prove that the 18-element non-lattice orthomodular poset depicted in the paper is the smallest one and unique up to isomorphism. Since not every Boolean poset is orthomodular, we consider the class of the so-called generalized orthomodular posets introduced by the first and third author in a previous paper. We show that this class contains all Boolean posets and we study its subclass consisting of horizontal sums of Boolean posets. For this purpose we introduce the concept of a compatibility relation and the so-called commutator of two elements. We show the relationship between these concepts and we introduce the notion of a ternary discriminator for these posets. Numerous examples illuminating these concepts and results are included in the paper.
\end{abstract}

{\bf AMS Subject Classification:} 06A11, 06C15, 06E75, 03G12

{\bf Keywords:} Smallest non-lattice orthomodular poset, generalized orthomodular poset, Boolean poset, horizontal sum, compatibility relation, commutator, ternary discriminator

\section{Introduction}

It is well-known that the set of closed subspaces of a Hilbert space forms a complete orthomodular lattice with respect to set-inclusion. Because these subspaces correspond to self-adjoint bounded operators which correspond to observables in quantum measurements, this orthomodular lattice is often considered as an algebraic counterpart of the logic of quantum mechanics, see e.g.\ \cite{BV} or \cite H. Recall that an {\em ortholattice} is a bounded lattice $(L,\vee,\wedge,{}',0,1)$ with an antitone involution $'$ which is a complementation, and an {\em orthomodular lattice} is an ortholattice $(L,\vee,\wedge,{}',0,1)$ satisfying the so-called {\em orthomodular law}, i.e.
\begin{enumerate}
\item[(OM)] if $x\leq y$ then $y=x\vee(y\wedge x')$
\end{enumerate}
which is equivalent to its dual
\begin{enumerate}
\item[] if $x\leq y$ then $x=y\wedge(x\vee y')$.
\end{enumerate}
However, it was recognized later that if the elements $x$ and $y$ are not {\em orthogonal}, i.e.\ if not $x\leq y'$ then the join $x\vee y$ need not exist in accordance with quantum theory. Hence, so-called orthomodular posets were introduced (see e.g.\ \cite F) as follows:

An {\em orthomodular poset} is a bounded poset $(P,\leq,{}',0,1)$ with an antitone involution~$'$ which is a complementation satisfying the following conditions:
\begin{enumerate}
\item[(i)] if $x\perp y$ then $x\vee y$ is defined,
\item[(ii)] if $x\leq y$ then $y=x\vee(y\wedge x')$.\quad(OM)
\end{enumerate}
Here and in the following, $x\perp y$ means $x\leq y'$. Observe that the expression in (ii) is well-defined because $x\perp y'$ yields that $y'\vee x$ exists and, by De Morgan's laws, also $y\wedge x'=(y'\vee x)'$ is defined and, due to $y\wedge x'\perp x$ also $x\vee(y\wedge x')$ is defined. Of course, (ii) is equivalent to its dual
\[
x\leq y\text{ implies }x=y\wedge(x\vee y').
\]
It is evident that if the lattice $\mathbf L=(L,\vee,\wedge,{}',0,1)$ is Boolean, i.e.\ a distributive complemented lattice then it is orthomodular. Unfortunately, a similar result does not hold for distributive posets. This is the reason why we introduced the concept of a generalized orthomodular poset (see e.g.\ \cite{CL22}) which, as we will show, can be also a Boolean poset. Hence, we essentially extend the class of orthomodular posets in such a way that they share more natural properties with orthomodular lattices than orthomodular posets do. This is one of our goals in this paper. The second problem connected with orthomodular posets was to find such a poset of minimal size that is not a lattice. As far as we know, this problem was not solved previously. We will present a complete solution.

\section{Basic concepts}

In the following we need several concepts and notations which are presented in this section.

Let $\mathbf P=(P,\leq)$ be a poset, $A,B\subseteq P$ and $a,b\in P$. We define $A\leq B$ if and only if $x\leq y$ for all $x\in A$ and all $y\in B$. Instead of $A\leq\{b\}$, $\{a\}\leq B$ and $\{a\}\leq\{b\}$ we simply write $A\leq b$, $a\leq B$ and $a\leq b$, respectively. The sets
\begin{align*}
L(A) & :=\{x\in P\mid x\leq A\}, \\
U(A) & :=\{x\in P\mid A\leq x\}
\end{align*}
are called the {\em lower cone} and {\em upper cone} of $A$, respectively. Instead of $L(A\cup B)$, $L(A\cup\{b\})$, $L(\{a,b\})$ and $L\big(U(A)\big)$ we write $L(A,B)$, $L(A,b)$, $L(a,b)$ and $LU(A)$, respectively. Analogously, we proceed in similar cases. Recall that $\mathbf P$ is called {\em distributive} (see e.g.\ \cite{LR}) if it satisfies the identity
\[
L\big(U(x,y),z\big)\approx LU\big(L(x,z),L(y,z)\big)
\]
or, equivalently, one of the following identities:
\begin{align*}
UL\big(U(x,y),z\big) & \approx U\big(L(x,z),L(y,z)\big), \\
 U\big(L(x,y),z\big) & \approx UL\big(U(x,z),U(y,z)\big), \\
LU\big(L(x,y),z\big) & \approx L\big(U(x,z),U(y,z)\big).
\end{align*}
Here and in the following $L\big(U(x,y),z\big)\approx LU\big(L(x,z),L(y,z)\big)$ means that
\[
L\big(U(x,y),z\big)=LU\big(L(x,z),L(y,z)\big)\text{ holds for all }x,y,z\in P.
\]
It can be easily seen that if $\mathbf P$ is a lattice then it is a distributive poset if and only if it satisfies the distributive law
\[
(x\vee y)\wedge z\approx(x\wedge z)\vee(y\wedge z).
\]

\begin{lemma}\label{lem4}
Let $(P,\leq,0,1)$ be a bounded distributive poset and $a,b,c,a',b'\in P$. Then {\rm(i)} and {\rm(ii)} hold:
\begin{enumerate}[{\rm(i)}]
\item If $b$ and $c$ are complements of $a$ then $b=c$,
\item if $a\leq b$, $U(a,a')=\{1\}$ and $L(b,b')=\{0\}$ then $b'\leq a'$.
\end{enumerate}
\end{lemma}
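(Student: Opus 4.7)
The plan is to apply the distributive identity $L(U(x,y),z) \approx LU(L(x,z),L(y,z))$ at well-chosen substitutions and to exploit the general fact that $LU(L(u,v)) = L(u,v)$, which holds because $u$ and $v$ are themselves upper bounds of $L(u,v)$, so any lower bound of $U(L(u,v))$ must lie below both $u$ and $v$.

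For part (i), suppose $b$ and $c$ are both complements of $a$, so $U(a,b) = U(a,c) = \{1\}$ and $L(a,b) = L(a,c) = \{0\}$. I substitute $x := a$, $y := b$, $z := c$ into the distributive identity: the left-hand side $L(U(a,b),c)$ collapses to $L(1,c) = L(c)$, while on the right $L(a,c) = \{0\}$ is absorbed into $L(b,c)$ (since $0 \in L(b,c)$), leaving $LU(L(b,c)) = L(b,c)$. The resulting set equality $L(c) = L(b,c)$ gives $c \leq b$, and swapping the roles of $b$ and $c$ gives $b \leq c$; hence $b = c$.

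For part (ii), I substitute $x := a$, $y := a'$, $z := b'$. The hypothesis $U(a,a') = \{1\}$ reduces the left-hand side to $L(1,b') = L(b')$. For the right-hand side, the hypothesis $a \leq b$ together with $L(b,b') = \{0\}$ yields $L(a,b') \subseteq L(b,b') = \{0\}$, so $L(a,b') = \{0\}$ is again absorbed into $L(a',b')$, and the right-hand side collapses to $LU(L(a',b')) = L(a',b')$. Thus $L(b') = L(a',b')$, and since $b' \in L(b')$ this forces $b' \leq a'$.

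I do not expect serious obstacles. The main subtlety is to remember that the distributive law is an equality of sets, so that passing from a set equality such as $L(b') = L(a',b')$ to a pointwise inequality is done by inserting the relevant element into the left-hand side. The whole argument is a faithful adaptation of the classical lattice proofs of uniqueness of complements and antitonicity of complementation, translated into the language of lower and upper cones.
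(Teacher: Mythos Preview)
Your proof is correct and follows essentially the same approach as the paper: the same substitutions into the distributive identity, the same reduction $L(a,b')=\{0\}$ in part~(ii), and the same use of $LUL(u,v)=L(u,v)$. The only cosmetic difference is in part~(i), where the paper chains two applications of distributivity to obtain $L(b)=L(c)$ directly, whereas you apply it once to get $L(c)=L(b,c)$ (hence $c\leq b$) and then invoke symmetry; the content is the same.
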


\begin{proof}
\
\begin{enumerate}[(i)]
\item We have
\begin{align*}
L(b) & =L(1,b)=L\big(U(c,a),b\big)=LU\big(L(c,b),L(a,b)\big)=LU\big(L(c,b),0\big)= \\
     & =LU\big(L(b,c),0)=LU\big(L(b,c),L(a,c)\big)=L\big(U(b,a),c\big)=L(1,c)=L(c)
\end{align*}
and hence $b=c$.
\item We have
\[
\{0\}\subseteq L(a,b')\subseteq L(b,b')=\{0\}
\]
and hence $L(a,b')=\{0\}$ which implies
\begin{align*}
b' & \in L(b')=L(1,b')=L\big(U(a,a'),b'\big)=LU\big(L(a,b'),L(a',b')\big)= \\
   & =LU\big(0,L(a',b')\big)=LUL(a',b')=L(a',b')\subseteq L(a'),
\end{align*}
i.e.\ $b'\leq a'$.
\end{enumerate}
\end{proof}

In order to avoid the mentioned discrepancy that a distributive complemented poset need not be orthomodular, we define a concept that is a bit more general and which was introduced in \cite{CL22} (cf.\ also the paper \cite{CFL}):

\begin{definition}\label{def1}
A {\em generalized orthomodular poset} is a bounded poset $\mathbf P=(P,\leq,{}',0,1)$ with an antitone involution that is a complementation that satisfies the condition
\begin{enumerate}
\item[{\rm(GOM)}] $x\leq y$ implies $U(y)=U\big(x,L(y,x')\big)$.
\end{enumerate}
\end{definition}

It is worth noticing that (GOM) is equivalent to its dual
\[
x\leq y\text{ implies }L(x)=L\big(y,U(x,y')\big),
\]
and if the poset is {\em orthogonal}, i.e.\ if for all $x,y\in P$ with $x\leq y'$ there exists $x\vee y$, then (GOM) is equivalent to (OM) and hence $\mathbf P$ is an orthomodular poset.

Recall that a {\em Boolean poset} is a distributive complemented poset.

By Lemma~\ref{lem4} the complementation in a Boolean poset is unique and antitone. This fact will be used in proofs of our results in Section~4. It is easy to prove the following assertion.

\begin{proposition}\label{prop2}
Let $\mathbf B=(B,\leq,{}',0,1)$ be a Boolean poset. Then $\mathbf B$ is a generalized orthomodular poset.
\end{proposition}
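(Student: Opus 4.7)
The plan is to verify condition (GOM) directly by applying the distributive identity
\[
L\bigl(U(x,y),z\bigr)\approx LU\bigl(L(x,z),L(y,z)\bigr)
\]
to the triple $(x,x',y)$ and then stripping off an $L$ by taking $U$ of both sides. The rest will be bookkeeping about cones.

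First I would fix $x,y\in B$ with $x\le y$ and exploit that $'$ is a complementation, so that $U(x,x')=\{1\}$ and $L(x,x')=\{0\}$. Applying the distributive law above with $z=y$ then yields
\[
L(y)=L(1,y)=L\bigl(U(x,x'),y\bigr)=LU\bigl(L(x,y),L(x',y)\bigr).
\]
Since $x\le y$ gives $L(x,y)=L(x)$, the right-hand side collapses to $LU\bigl(L(x),L(x',y)\bigr)$, so
\[
L(y)=LU\bigl(L(x),L(x',y)\bigr).
\]

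Next I would take the upper cone of both sides. Using the standard identity $UL(y)=U(y)$ (valid because $y$ is the greatest element of $L(y)$) and $ULU=U$, this gives
\[
U(y)=U\bigl(L(x),L(x',y)\bigr).
\]
Finally I would remove the inner $L$ on $L(x)$ via the elementary observation that for any $z\in B$, $L(x)\le z$ is equivalent to $x\le z$; combined with $U(A\cup B)=U(A)\cap U(B)$, this yields
\[
U\bigl(L(x),L(x',y)\bigr)=U(x)\cap U\bigl(L(x',y)\bigr)=U\bigl(x,L(y,x')\bigr),
\]
which is exactly (GOM).

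I do not anticipate a genuine obstacle here: everything is a controlled chain of rewritings using the distributive identity and the defining properties of the complementation (unique by Lemma~\ref{lem4}(i) and antitone), together with the trivial cone identities $UL(y)=U(y)$, $ULU=U$ and $U(A\cup B)=U(A)\cap U(B)$. The only point that requires a small bit of care is choosing the right instance of the distributive law (namely with $z=y$, not $z=x'$) so that, after $x\le y$ simplifies $L(x,y)$ to $L(x)$, the resulting expression matches the right-hand side of (GOM) verbatim.
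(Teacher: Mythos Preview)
Your proof is correct and follows essentially the same route as the paper: verify that $'$ is an antitone involution via Lemma~\ref{lem4}, then derive (GOM) from a single instance of the distributive law together with $U(x,x')=\{1\}$ and $x\le y$. The only cosmetic difference is that the paper applies the equivalent form $U\bigl(L(x,y),z\bigr)\approx UL\bigl(U(x,z),U(y,z)\bigr)$ directly to obtain
\[
U(y)=ULU(y)=UL\bigl(U(y),1\bigr)=UL\bigl(U(x,y),U(x,x')\bigr)=U\bigl(x,L(y,x')\bigr),
\]
whereas you first compute $L(y)$ via the $L\bigl(U(x,y),z\bigr)$ form and then apply $U$; the content is the same.
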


\begin{proof}
Since $x$ and $x''$ are complements of $x'$, we obtain $x''\approx x$ by Lemma~\ref{lem4} (i). According to Lemma~\ref{lem4} (ii), $'$ is antitone. Finally, if $x\leq y$ then
\[
U(y)=ULU(y)=UL\big(U(y),1\big)=UL\big(U(x,y),U(x,x')\big)=U\big(x,L(y,x')\big)
\]
using distributivity of $\mathbf B$.
\end{proof}

\begin{example}
The poset depicted in Fig.~1 is a non-lattice Boolean poset and hence a generalized orthomodular poset according to Proposition~\ref{prop2}.

\vspace*{-4mm}

\begin{center}
\setlength{\unitlength}{7mm}
\begin{picture}(8,10)
\put(4,1){\circle*{.3}}
\put(1,3){\circle*{.3}}
\put(3,3){\circle*{.3}}
\put(5,3){\circle*{.3}}
\put(7,3){\circle*{.3}}
\put(1,5){\circle*{.3}}
\put(7,5){\circle*{.3}}
\put(1,7){\circle*{.3}}
\put(3,7){\circle*{.3}}
\put(5,7){\circle*{.3}}
\put(7,7){\circle*{.3}}
\put(4,9){\circle*{.3}}
\put(4,1){\line(-3,2)3}
\put(4,1){\line(-1,2)1}
\put(4,1){\line(1,2)1}
\put(4,1){\line(3,2)3}
\put(4,9){\line(-3,-2)3}
\put(4,9){\line(-1,-2)1}
\put(4,9){\line(1,-2)1}
\put(4,9){\line(3,-2)3}
\put(1,3){\line(0,1)4}
\put(1,3){\line(1,1)4}
\put(3,3){\line(-1,1)2}
\put(3,3){\line(1,1)4}
\put(5,3){\line(-1,1)4}
\put(5,3){\line(1,1)2}
\put(7,3){\line(-1,1)4}
\put(7,3){\line(0,1)4}
\put(1,5){\line(1,1)2}
\put(7,5){\line(-1,1)2}
\put(3.85,.3){$0$}
\put(.35,2.85){$a$}
\put(2.35,2.85){$b$}
\put(5.4,2.85){$c$}
\put(7.4,2.85){$d$}
\put(.35,4.85){$e$}
\put(7.4,4.85){$e'$}
\put(.35,6.85){$d'$}
\put(2.35,6.85){$c'$}
\put(5.4,6.85){$b'$}
\put(7.4,6.85){$a'$}
\put(3.85,9.4){$1$}
\put(3.2,-.75){{\rm Fig.~1}}
\end{picture}
\end{center}

\vspace*{4mm}

This poset is not an orthomodular poset since $a\leq c'$, but $a\vee c$ does not exist.
\end{example}

\section{The smallest non-lattice orthomodular poset}

As mentioned in the introduction, as far as we know, the smallest non-lattice orthomodular poset is not known up to now. Sometimes the following 20-element non-lattice orthomodular poset was considered (Fig.~2). It is the poset of all subsets $A$ of the set $\{1,\ldots,6\}$ having an even number of elements and satisfying $|A\cap\{1,2,3\}|=|A\cap\{4,5,6\}|$.

\vspace*{-2mm}

\begin{center}
\setlength{\unitlength}{7mm}
\begin{picture}(18,8)
\put(9,1){\circle*{.3}}
\put(1,3){\circle*{.3}}
\put(3,3){\circle*{.3}}
\put(5,3){\circle*{.3}}
\put(7,3){\circle*{.3}}
\put(9,3){\circle*{.3}}
\put(11,3){\circle*{.3}}
\put(13,3){\circle*{.3}}
\put(15,3){\circle*{.3}}
\put(17,3){\circle*{.3}}
\put(1,5){\circle*{.3}}
\put(3,5){\circle*{.3}}
\put(5,5){\circle*{.3}}
\put(7,5){\circle*{.3}}
\put(9,5){\circle*{.3}}
\put(11,5){\circle*{.3}}
\put(13,5){\circle*{.3}}
\put(15,5){\circle*{.3}}
\put(17,5){\circle*{.3}}
\put(9,7){\circle*{.3}}
\put(1,3){\line(0,2)2}
\put(1,3){\line(1,1)2}
\put(1,3){\line(3,1)6}
\put(1,3){\line(4,1)8}
\put(3,3){\line(-1,1)2}
\put(3,3){\line(1,1)2}
\put(3,3){\line(2,1)4}
\put(3,3){\line(4,1)8}
\put(5,3){\line(-1,1)2}
\put(5,3){\line(0,1)2}
\put(5,3){\line(2,1)4}
\put(5,3){\line(3,1)6}
\put(7,3){\line(-3,1)6}
\put(7,3){\line(-2,1)4}
\put(7,3){\line(3,1)6}
\put(7,3){\line(4,1)8}
\put(9,3){\line(-4,1)8}
\put(9,3){\line(-2,1)4}
\put(9,3){\line(2,1)4}
\put(9,3){\line(4,1)8}
\put(11,3){\line(-4,1)8}
\put(11,3){\line(-3,1)6}
\put(11,3){\line(2,1)4}
\put(11,3){\line(3,1)6}
\put(13,3){\line(-3,1)6}
\put(13,3){\line(-2,1)4}
\put(13,3){\line(0,1)2}
\put(13,3){\line(1,1)2}
\put(15,3){\line(-4,1)8}
\put(15,3){\line(-2,1)4}
\put(15,3){\line(-1,1)2}
\put(15,3){\line(1,1)2}
\put(17,3){\line(-4,1)8}
\put(17,3){\line(-3,1)6}
\put(17,3){\line(-1,1)2}
\put(17,3){\line(0,1)2}
\put(9,1){\line(-4,1)8}
\put(9,1){\line(-3,1)6}
\put(9,1){\line(-2,1)4}
\put(9,1){\line(-1,1)2}
\put(9,1){\line(0,1)2}
\put(9,1){\line(1,1)2}
\put(9,1){\line(2,1)4}
\put(9,1){\line(3,1)6}
\put(9,1){\line(4,1)8}
\put(9,7){\line(-4,-1)8}
\put(9,7){\line(-3,-1)6}
\put(9,7){\line(-2,-1)4}
\put(9,7){\line(-1,-1)2}
\put(9,7){\line(0,-1)2}
\put(9,7){\line(1,-1)2}
\put(9,7){\line(2,-1)4}
\put(9,7){\line(3,-1)6}
\put(9,7){\line(4,-1)8}
\put(8.85,.25){$\emptyset$}
\put(.85,2.3){$a$}
\put(2.85,2.3){$b$}
\put(4.85,2.3){$c$}
\put(6.85,2.3){$d$}
\put(8.85,2.3){$e$}
\put(10.85,2.3){$f$}
\put(12.85,2.3){$g$}
\put(14.85,2.3){$h$}
\put(16.85,2.3){$i$}
\put(.8,5.4){$i'$}
\put(2.8,5.4){$h'$}
\put(4.8,5.4){$g'$}
\put(6.8,5.4){$f'$}
\put(8.8,5.4){$e'$}
\put(10.8,5.4){$d'$}
\put(12.8,5.4){$c'$}
\put(14.8,5.4){$b'$}
\put(16.8,5.4){$a'$}
\put(8.75,7.4){$N$}
\put(8.2,-.75){{\rm Fig.~2}}
\end{picture}
\end{center}

\vspace*{4mm}

Here $a=\{1,4\}$, $b=\{1,5\}$, $c=\{1,6\}$, $d=\{2,4\}$, $e=\{2,5\}$, $f=\{2,6\}$, $g=\{3,4\}$, $h=\{3,5\}$, $i=\{3,6\}$, $a'=\{2,3,5,6\}$, $b'=\{2,3,4,6\}$, $c'=\{2,3,4,5\}$, $d'=\{1,3,5,6\}$, $e'=\{1,3,4,6\}$, $f'=\{1,3,4,5\}$, $g'=\{1,2,5,6\}$, $h'=\{1,2,4,6\}$, $i'=\{1,2,4,5\}$ and $N=\{1,\ldots,6\}$.
For
\[
P:=\{A\subseteq N\mid|A\cap\{1,2,3\}|=|A\cap\{4,5,6\}|\},
\]
the poset $\mathbf P=(P,\subseteq,{}',\emptyset,N)$ is not a lattice since e.g.\ $a\vee b$ does not exist. Note that $\mathbf P$ is the smallest orthomodular subposet of the orthomodular poset $(Q,\subseteq,{}',\emptyset,N)$ with $Q:=\{A\in 2^N\mid A\text{ has an even number of elements}\}$ containing $a=\{1,4\}$ and $b=\{1,5\}$.

However, we will prove the following result.

\begin{theorem}
The smallest non-lattice orthomodular poset is depicted in Fig.~3 and is unique up to isomorphism.

\vspace*{-4mm}

\begin{center}
\setlength{\unitlength}{7mm}
\begin{picture}(16,8)
\put(8,1){\circle*{.3}}
\put(1,3){\circle*{.3}}
\put(3,3){\circle*{.3}}
\put(5,3){\circle*{.3}}
\put(7,3){\circle*{.3}}
\put(9,3){\circle*{.3}}
\put(11,3){\circle*{.3}}
\put(13,3){\circle*{.3}}
\put(15,3){\circle*{.3}}
\put(1,5){\circle*{.3}}
\put(3,5){\circle*{.3}}
\put(5,5){\circle*{.3}}
\put(7,5){\circle*{.3}}
\put(9,5){\circle*{.3}}
\put(11,5){\circle*{.3}}
\put(13,5){\circle*{.3}}
\put(15,5){\circle*{.3}}
\put(8,7){\circle*{.3}}
\put(1,3){\line(0,2)2}
\put(1,3){\line(1,1)2}
\put(1,3){\line(2,1)4}
\put(1,3){\line(4,1)8}
\put(3,3){\line(-1,1)2}
\put(3,3){\line(0,1)2}
\put(3,3){\line(2,1)4}
\put(3,3){\line(4,1)8}
\put(5,3){\line(-2,1)4}
\put(5,3){\line(4,1)8}
\put(7,3){\line(-3,1)6}
\put(7,3){\line(4,1)8}
\put(9,3){\line(-3,1)6}
\put(9,3){\line(2,1)4}
\put(11,3){\line(-4,1)8}
\put(11,3){\line(2,1)4}
\put(13,3){\line(-4,1)8}
\put(13,3){\line(-3,1)6}
\put(13,3){\line(0,1)2}
\put(13,3){\line(1,1)2}
\put(15,3){\line(-3,1)6}
\put(15,3){\line(-2,1)4}
\put(15,3){\line(-1,1)2}
\put(15,3){\line(0,1)2}
\put(8,1){\line(-7,2)7}
\put(8,1){\line(-5,2)5}
\put(8,1){\line(-3,2)3}
\put(8,1){\line(-1,2)1}
\put(8,1){\line(1,2)1}
\put(8,1){\line(3,2)3}
\put(8,1){\line(5,2)5}
\put(8,1){\line(7,2)7}
\put(8,7){\line(-7,-2)7}
\put(8,7){\line(-5,-2)5}
\put(8,7){\line(-3,-2)3}
\put(8,7){\line(-1,-2)1}
\put(8,7){\line(1,-2)1}
\put(8,7){\line(3,-2)3}
\put(8,7){\line(5,-2)5}
\put(8,7){\line(7,-2)7}
\put(7.85,.25){$0$}
\put(.85,2.3){$a$}
\put(2.85,2.3){$b$}
\put(4.85,2.3){$c$}
\put(6.85,2.3){$d$}
\put(8.85,2.3){$e$}
\put(10.85,2.3){$f$}
\put(12.85,2.3){$g$}
\put(14.85,2.3){$h$}
\put(.8,5.4){$h'$}
\put(2.8,5.4){$g'$}
\put(4.8,5.4){$f'$}
\put(6.8,5.4){$e'$}
\put(8.8,5.4){$d'$}
\put(10.8,5.4){$c'$}
\put(12.8,5.4){$b'$}
\put(14.8,5.4){$a'$}
\put(7.85,7.4){$1$}
\put(7.2,-.75){{\rm Fig.~3}}
\end{picture}
\end{center}

\vspace*{1mm}

\end{theorem}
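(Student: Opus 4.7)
The plan is to prove, in order: (a) the poset $\mathbf Q$ of Fig.~3 is an orthomodular poset and is not a lattice; (b) any non-lattice orthomodular poset has at least $18$ elements; (c) any $18$-element non-lattice orthomodular poset is isomorphic to $\mathbf Q$.

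For (a), the orthocomplementation in $\mathbf Q$ is read off the labels. The non-lattice failure is visible on the diagram: the atoms $a$ and $b$ lie below exactly the two coatoms $h'$ and $g'$, which are incomparable, so $\{a,b\}$ has no minimum upper bound. The OMP axioms reduce to finitely many instances: for orthogonal joins the only nontrivial pairs are pairs of atoms $x,y$ with $x\le y'$, and for (OM) only $x\le y$ with $x$ an atom and $y$ a coatom is nontrivial. Both reduce to a bipartite incidence check on the Hasse diagram.

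For (b), let $\mathbf P$ be a non-lattice orthomodular poset of minimal cardinality. Comparable and orthogonal pairs always have joins, so the failing pair $u,v$ must be incomparable and non-orthogonal. A minimality argument on chains and their orthocomplements shows $\mathbf P$ must have height $3$, so every non-trivial element is an atom or a coatom; in height $3$ a mixed failing pair has join $1$, so the failing pair consists of two atoms $a,b$, and two distinct minimal upper bounds $p,q$ of $\{a,b\}$ must be coatoms. Applying (OM) to $a\le p$ gives $p=a\vee(p\wedge a')$, so $p\wedge a'$ exists as an atom $r$, and the triple $\{a,r,p'\}$ is pairwise orthogonal and generates a Boolean $2^3$-block. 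Doing the same for $a\le q$, $b\le p$ and $b\le q$ produces four such blocks. A second application of (OM) shows that the third atom completing an orthogonal pair is uniquely determined, hence any two of these forced blocks intersect in at most one atom. Combined with the facts that $a\not\perp b$ and $p,q$ are incomparable, a careful case analysis of the possible overlap patterns of the four blocks shows they must jointly contain at least $8$ distinct atoms; by orthocomplementation $\mathbf P$ then has at least $8$ coatoms too, so $|P|\ge 2+8+8=18$.

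For (c), in the extremal case $|P|=18$ each identification in the above case analysis is forced, so the bipartite atom--coatom incidence of $\mathbf P$ coincides, up to relabeling, with the one read off from Fig.~3. The main obstacle is the combinatorial accounting in step (b): one has to rule out, case by case, each alternative atom-identification pattern between the four forced Boolean $2^3$-blocks without contradicting (OM), and to handle the preliminary reduction to height $3$, which is less immediate than the atom-level argument.
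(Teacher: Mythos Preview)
Your core strategy for (b)---pick $a,b$ with no join, take two incomparable minimal upper bounds, and apply (OM) four times to force new elements---is exactly the paper's. The divergence is your preliminary reduction to height~$3$, which you flag as an obstacle; the paper does not make this reduction at all. It simply takes $a,b$ with no supremum and two minimal upper bounds $g',h'$, sets
\[
c:=h'\wedge b',\quad d:=h'\wedge a',\quad e:=g'\wedge b',\quad f:=g'\wedge a'
\]
(these meets exist by orthogonality and De~Morgan, with no assumption that $a,b$ are atoms or $g',h'$ coatoms), and then checks case by case that $c,d,e,f,c',d',e',f'$ are pairwise distinct and distinct from the ten elements $0,a,b,g,h,a',b',g',h',1$. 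That yields $18$ distinct elements; since Fig.~3 is already a non-lattice orthomodular poset, minimality forces $|P|=18$.

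Your height-$3$ step is both unproved and unnecessary. The phrase ``a minimality argument on chains and their orthocomplements'' does not obviously go through: orthomodular posets are not closed under deleting elements of a chain, since this can destroy required orthogonal joins or break (OM) elsewhere. More to the point, your subsequent argument survives without it: the elements $p\wedge a'$, $p\wedge b'$, $q\wedge a'$, $q\wedge b'$ exist regardless of height, and the distinctness checks you describe (your ``overlap patterns of the four blocks'') are precisely the paper's case list, carried out for arbitrary elements rather than atoms. Dropping the height hypothesis turns your sketch into the paper's proof.

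For uniqueness (c), the paper's argument also differs from yours. It does not argue that the identifications are forced; instead, having located $18$ named elements with the Fig.~3 relations inside a minimal $\mathbf P$, it shows that adjoining any additional covering edge (together with its complementary edge) to the Hasse diagram of Fig.~3 produces a specific violation of (OM). Your ``identifications forced'' line covers why no two of the $18$ elements coincide, but you still need to exclude extra comparabilities among them, which is what the paper's edge-by-edge check does.
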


\begin{proof}
Let $\mathbf P=(P,\leq,{}',0,1)$ be a minimal non-lattice orthomodular poset. Then there exist $a,b\in P$ having no supremum. Let $g'$ and $h'$ be two minimal upper bounds of $a$ and $b$. If $g'$ and $h'$ would have an infimum then they would not be minimal upper bounds of $a$ and $b$. Hence $g'$ and $h'$ have no infimum. Thus the Hasse diagram of $\mathbf P$ must contain the configuration shown in Fig.~4:

\vspace*{-4mm}

\begin{center}
\setlength{\unitlength}{7mm}
\begin{picture}(2,4)
\put(0,1){\circle*{.3}}
\put(0,3){\circle*{.3}}
\put(2,1){\circle*{.3}}
\put(2,3){\circle*{.3}}
\put(0,1){\line(0,1)2}
\put(0,1){\line(1,1)2}
\put(2,1){\line(-1,1)2}
\put(2,1){\line(0,1)2}
\put(-.15,.3){$a$}
\put(1.85,.3){$b$}
\put(-.2,3.4){$h'$}
\put(1.8,3.4){$g'$}
\put(.2,-.75){{\rm Fig.~4}}
\end{picture}
\end{center}

\vspace*{4mm}

Since $\mathbf P$ is bounded and its unary operation $'$ is an antitone involution that is a complementation, $\mathbf P$ must contain the configuration visualized in Fig.~5 and $P$ must have an even number of elements. We also conclude that $a'$ and $b'$ have no infimum and $g$ and $h$ have no supremum.

\vspace*{-4mm}

\begin{center}
\setlength{\unitlength}{7mm}
\begin{picture}(6,8)
\put(0,3){\circle*{.3}}
\put(0,5){\circle*{.3}}
\put(2,3){\circle*{.3}}
\put(2,5){\circle*{.3}}
\put(3,1){\circle*{.3}}
\put(4,3){\circle*{.3}}
\put(4,5){\circle*{.3}}
\put(3,7){\circle*{.3}}
\put(6,3){\circle*{.3}}
\put(6,5){\circle*{.3}}
\put(3,1){\line(-3,2)3}
\put(3,1){\line(-1,2)1}
\put(3,1){\line(1,2)1}
\put(3,1){\line(3,2)3}
\put(3,7){\line(-3,-2)3}
\put(3,7){\line(-1,-2)1}
\put(3,7){\line(1,-2)1}
\put(3,7){\line(3,-2)3}
\put(0,3){\line(0,1)2}
\put(0,3){\line(1,1)2}
\put(2,3){\line(-1,1)2}
\put(2,3){\line(0,1)2}
\put(4,3){\line(0,1)2}
\put(4,3){\line(1,1)2}
\put(6,3){\line(-1,1)2}
\put(6,3){\line(0,1)2}
\put(2.85,.25){$0$}
\put(-.15,2.3){$a$}
\put(1.85,2.3){$b$}
\put(3.85,2.3){$g$}
\put(5.85,2.3){$h$}
\put(-.2,5.4){$h'$}
\put(1.8,5.4){$g'$}
\put(3.8,5.4){$b'$}
\put(5.8,5.4){$a'$}
\put(2.85,7.4){$1$}
\put(2.2,-.75){{\rm Fig.~5}}
\end{picture}
\end{center}

\vspace*{4mm}

It is clear that these 10 elements are pairwise distinct. Let us mention that this poset is orthogonal. Put
\begin{align*}
c & :=h'\wedge b', \\
d & :=h'\wedge a', \\
e & :=g'\wedge b', \\
f & :=g'\wedge a'.
\end{align*}
Because of the orthomodularity we have
\begin{align*}
h' & =b\vee c, \\
h' & =a\vee d, \\
g' & =b\vee e, \\
g' & =a\vee f.
\end{align*}
Using the facts $b\neq0$, $h\neq0$, $h\neq b'$ and that neither $a\vee b$ nor $g\vee h$ exists, we can prove $c\neq0,a,b,g,h,a',b',g',h',1$.
\begin{quote}
$c=0$ would imply $h'=b\vee0=b$, a contradiction, \\
$c=a$ would imply $a=h'\wedge b'\leq b'$ and hence $a\vee b$ would exist, a contradiction, \\
$c=b$ would imply $h'=b\vee b=b$, a contradiction, \\
$c=g$ would imply $g=h'\wedge b'\leq h'$ and hence $g\vee h$ would exist, a contradiction, \\
$c=h$ would imply $h\leq b\vee h=h'$ and hence $h=h\wedge h'=0$, a contradiction, \\
$c=a'$ would imply $a'=h'\wedge b'\leq b'$ and hence $a\vee b$ would exist, a contradiction, \\
$c=b'$ would imply $h'=b\vee b'=1$, a contradiction, \\
$c=g'$ would imply $g'\leq b\vee g'=h'$ and hence $g\vee h$ would exist, a contradiction, \\
$c=h'$ would imply $b\leq b\vee h'=h'=h'\wedge b'\leq b'$ and hence $b=b\wedge b'=0$, a~contradiction, \\
$c=1$ would imply $h'=b\vee1=1$, a contradiction.
\end{quote}
This shows $c\neq0,a,b,g,h,a',b',g',h',1$. Hence also $c'$ is different from these 10 elements. Because of symmetry reasons also $d,e,f,d',e',f'$ are different from these 10 elements. Altogether, we have shown that any of the elements $c,d,e,f,c',d',e',f'$ is different from $0,a,b,g,h,a',b',g',h',1$. Using the facts $c\neq0$, $h\neq0$ and that $g\vee h$ does not exist, we can prove $c\neq c',d',e',f'$.
\begin{quote}
$c=c'$ would imply $c=c\wedge c'=0$, a contradiction, \\
$c=d'$ would imply $h\leq h\vee a=d'=h'\wedge b\leq h'$ and hence $h=h\wedge h'=0$, a~contradiction, \\
$c=e'$ would imply $g\leq g\vee b=e'=h'\wedge b'\leq h'$ and hence $g\vee h$ would exist, a contradiction, \\
$c=f'$ would imply $g\leq g\vee a=f'=h'\wedge b'\leq h'$ and hence $g\vee h$ would exist, a contradiction.
\end{quote}
This shows $c\neq c',d',e',f'$. Because of symmetry reasons also $d,e,f$ are different from $c',d',e',f'$. Altogether, we have shown that any of the elements $c,d,e,f$ is different from $c',d',e',f'$. Using the facts $a\neq b$ and $g\neq h$, we can prove that $c,d,e,f$ are pairwise different.
\begin{quote}
$c=d$ would imply $h'\wedge b'=h'\wedge a'$ and hence $a'=h\vee(a'\wedge h')=h\vee(b'\wedge h')=b'$, a contradiction. \\
$c=e$ would imply $g'=b\vee e=b\vee c=h'$, a contradiction. \\
$c=f$ would imply $a\leq h'$, $f=c=h'\wedge b'\leq h'$, $b\leq g'$ and $c=f=g'\wedge a'\leq g'$ and hence $g'=a\vee f\leq h'=b\vee c\leq g'$ whence $g'=h'$, a contradiction. \\
$d=e$ would imply $b\leq h'$, $e=d=h'\wedge a'\leq h'$, $a\leq g'$ and $d=e=g'\wedge b'\leq g'$ and hence $g'=b\vee e\leq h'=a\vee d\leq g'$ whence $g'=h'$, a contradiction. \\
$d=f$ would imply $g'=a\vee f=a\vee d=h'$, a contradiction. \\
$e=f$ would imply $g'\wedge b'=g'\wedge a'$ and hence $a'=g\vee(a'\wedge g')=g\vee(b'\wedge g')=b'$, a contradiction.
\end{quote}
This shows that $c,d,e,f$ are pairwise different. Hence also $c',d',e',f'$ are pairwise different. Altogether, we have proved that the 18 elements
\[
0,a,b,c,d,e,f,g,h,a',b',c',d',e',f',g',h',1
\]
are pairwise different. This shows that $\mathbf P$ must contain the poset depicted in Fig.~3.
But this is already a non-lattice orthomodular poset and hence the smallest one with respect to the number of its elements. We need to show that it is unique up to isomorphism. If there would exist another 18-element orthomodular poset not isomorphic to $\mathbf P$ then its Hasse diagram must contain an edge which is not included in the Hasse diagram of $\mathbf P$. We can check this case. Consider that the orthomodular poset in question contains e.g.\ the edge $(a,b')$ and hence also $(b,a')$. Then (OM) is violated since $h'=a\vee(h'\wedge a')$ does not hold. If it would contain e.g.\ the edges $(c,g')$ and $(g,c')$ then similarly $g'=c\vee(g'\wedge c')$ does not hold. If it would contain e.g.\ $(d,b)$ and $(b',d')$ then (OM) would also be violated since
\begin{quote}
by adding the edge $(d,b)$ such that $d\leq b$ we would get that $h'=a\vee(h'\wedge a')$ would not hold,\\
by adding the edge $(d,b)$ such that $b\leq d$ we would get that $g'=a\vee(g'\wedge a')$ would not hold.
\end{quote}
All the remaining cases can be checked analogously as the previous ones. In all the cases we would reach a contradiction which proves that the poset $\mathbf P$ is unique up to isomorphism.
\end{proof}

\section{Horizontal sums}

The aim of this section is to describe a construction of generalized orthomodular posets by means of so-called horizontal sums. For the reader's convenience, let us recall this concept.

Let $\mathbf P_i=(P_i,\leq,{}',0,1),i\in I,$ be a non-empty family of bounded posets with an antitone involution. By the {\em horizontal sum} of the $\mathbf P_i$ we mean a poset $\mathbf P$ that is the union of disjoint copies of the posets $\mathbf P_i$ where the bottom and top elements of the $\mathbf P_i$, respectively, are identified.

\begin{proposition}\label{prop1}
Let $\mathbf P_i=(P_i,\leq,{}',0,1),i\in I,$ be a non-empty family of generalized orthomodular posets. Then the horizontal sum $(P,\leq,{}',0,1)$ of the $\mathbf P_i,i\in I,$ is a generalized orthomodular poset.
\end{proposition}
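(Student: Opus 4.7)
The plan is to check the conditions of Definition~\ref{def1} for the horizontal sum directly, reducing everything to the corresponding conditions inside the summands~$\mathbf{P}_i$. Throughout I write $L_i,U_i$ for the cones computed within $\mathbf{P}_i$ and reserve $L,U$ for the cones in $\mathbf{P}$. First I would verify that the extension of $'$ to $P$ (defined componentwise on each $P_i\setminus\{0,1\}$, with $0'=1$ and $1'=0$) is a well-defined antitone involution on $\mathbf P$: the only identified elements across components are $0$ and $1$, so $'$ is well defined; the involution law follows from the corresponding law in each $\mathbf{P}_i$; and antitony holds because elements from distinct components of $P$ remain incomparable (only $0$ and $1$ mediate between them). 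To check that $'$ is a complementation, take $x\in P_i\setminus\{0,1\}$; any nonzero common lower bound of $x$ and $x'$ must lie below $x$ and hence in $P_i$, so $L(x,x')=L_i(x,x')=\{0\}$ by complementation in~$\mathbf{P}_i$, and dually $U(x,x')=\{1\}$.

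The second step is a \emph{localization lemma}: for any $y\in P_i\setminus\{0,1\}$ and any $A\subseteq P_i$, the cones $L(y,A)$ and $U(y,A)$ coincide with $L_i(y,A)$ and $U_i(y,A)$ respectively. Indeed, any element of $P_j\setminus\{0\}$ with $j\ne i$ fails to be $\le y$ (because $y\ne1$), and any element of $P_j\setminus\{1\}$ with $j\ne i$ fails to be $\ge y$ (because $y\ne0$); the only cross-component candidates for the cones are $0$ and $1$, and those already belong to $P_i$.

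To verify (GOM), suppose $x\le y$ in $\mathbf P$. The edge cases $x=0$ and $y=1$ are handled by direct calculation: when $x=0$, we have $U\bigl(x,L(y,x')\bigr)=U\bigl(0,L(y)\bigr)=UL(y)=U(y)$ since $y$ is the largest element of $L(y)$; when $y=1$, any $u\in U\bigl(x,L(x')\bigr)$ satisfies $u\ge x$ and $u\ge x'$ (the latter because $x'\in L(x')$), so $u\in U(x,x')=\{1\}$, which yields $U\bigl(x,L(y,x')\bigr)=\{1\}=U(y)$. In the remaining case $0<x\le y<1$, the comparability of $x$ and $y$ in the horizontal sum forces $x,y\in P_i$ for some common $i$, whence also $x'\in P_i$; two applications of the localization lemma then identify $U(y)$ with $U_i(y)$ and $U\bigl(x,L(y,x')\bigr)$ with $U_i\bigl(x,L_i(y,x')\bigr)$, so (GOM) for $\mathbf P$ reduces to (GOM) for~$\mathbf{P}_i$, which holds by hypothesis.

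No step is individually hard; the main point requiring care is the bookkeeping in the edge cases where $L(y,x')$ can contain the stray elements $0$ or $1$, and the observation that comparability in the horizontal sum forces two nontrivial elements into a common component. Once the localization lemma is in place, (GOM) in the horizontal sum is just (GOM) in each $\mathbf{P}_i$, read through the identification of cones on the nontrivial stratum.
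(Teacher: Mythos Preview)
Your proof is correct and follows essentially the same approach as the paper: both hinge on the observation that comparable elements in a horizontal sum must lie in a common summand, so (GOM) reduces to (GOM) in that summand. Your version is considerably more detailed---the paper's two-line argument simply asserts that (GOM) ``surely holds'' once $a,b$ share a block, whereas your localization lemma and edge-case analysis make explicit why the cones computed in $\mathbf P$ agree with those in $\mathbf P_i$---but the underlying idea is identical.
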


\begin{proof}
If $a,b\in P$ and $a\leq b$ then there exists some $i\in I$ with $a,b\in P_i$, thus (GOM) surely holds. If there does not exist some $i\in I$ with $a,b\in P_i$ then $a\parallel b$ and hence (GOM) is satisfied trivially for these elements $a$ and $b$.
\end{proof}

Let us note that if $|I|>1$ and each $\mathbf P_i$ is non-trivial (i.e.\ has more than two elements) then the horizontal sum of the $\mathbf P_i$ is a non-distributive generalized orthomodular poset. Namely, if $j,k\in I$, $j\neq k$, $a\in P_j\setminus\{0,1\}$ and $b\in P_k\setminus\{0,1\}$ then
\[
L\big(U(a,a'),b\big)=L(1,b)=L(b)\neq L(0)=LU(0)=LU(0,0)=LU\big(L(a,b),L(a',b)\big).
\]

\begin{corollary}
The horizontal sum of a family of Boolean posets is a generalized orthomodular poset.
\end{corollary}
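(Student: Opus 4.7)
The plan is to combine the two propositions immediately preceding the corollary. By Proposition~\ref{prop2}, every Boolean poset is itself a generalized orthomodular poset; by Proposition~\ref{prop1}, the horizontal sum of any non-empty family of generalized orthomodular posets is again a generalized orthomodular poset. So the argument is just one line: given a family of Boolean posets $\mathbf B_i=(B_i,\leq,{}',0,1)$, $i\in I$, apply Proposition~\ref{prop2} to each $\mathbf B_i$ to regard it as a generalized orthomodular poset, and then apply Proposition~\ref{prop1} to their horizontal sum.

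The only thing one needs to check before writing this down is that the horizontal sum construction is well-defined on the data, i.e.\ that each $\mathbf B_i$ has a bottom $0$, a top $1$, and an antitone involution ${}'$. This is part of the definition of a Boolean poset (a bounded, distributive, complemented poset), where the complementation is known to be unique and antitone by Lemma~\ref{lem4}, so the required structural data on each summand is available and the hypothesis of Proposition~\ref{prop1} is met.

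There is no real obstacle here: the statement is a pure corollary of the two preceding results, and no additional verification of (GOM) is needed because that work has already been done in Proposition~\ref{prop1}. The only stylistic choice is whether to spell out the two-step chain explicitly or simply cite both results; given that this is presented as a corollary at the end of the section, a single sentence citing Proposition~\ref{prop2} and Proposition~\ref{prop1} suffices.
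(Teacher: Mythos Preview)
Your proposal is correct and matches the paper's own proof essentially line for line: the paper simply invokes Proposition~\ref{prop2} to view each Boolean poset as a generalized orthomodular poset and then applies Proposition~\ref{prop1} to the horizontal sum. Your additional remark about well-definedness via Lemma~\ref{lem4} is accurate but not needed beyond what the paper already records.
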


\begin{proof}
By Proposition~\ref{prop2}, every Boolean poset is a generalized orthomodular poset. The rest follows from Proposition~\ref{prop1}.
\end{proof}

\begin{example}
The orthomodular poset $\mathbf P$ depicted in Fig.~3 is not a horizontal sum of Boolean posets.
\end{example}

It is a question if every generalized orthomodular poset is the horizontal sum of Boolean posets. In the next example we show that this is not the case.

\begin{example}
Consider the poset $\mathbf P$ depicted in Fig.~6:

\vspace*{-4mm}

\begin{center}
\setlength{\unitlength}{7mm}
\begin{picture}(8,8)
\put(4,1){\circle*{.3}}
\put(1,3){\circle*{.3}}
\put(3,3){\circle*{.3}}
\put(5,3){\circle*{.3}}
\put(7,3){\circle*{.3}}
\put(1,5){\circle*{.3}}
\put(3,5){\circle*{.3}}
\put(5,5){\circle*{.3}}
\put(7,5){\circle*{.3}}
\put(4,7){\circle*{.3}}
\put(4,1){\line(-3,2)3}
\put(4,1){\line(-1,2)1}
\put(4,1){\line(1,2)1}
\put(4,1){\line(3,2)3}
\put(4,7){\line(-3,-2)3}
\put(4,7){\line(-1,-2)1}
\put(4,7){\line(1,-2)1}
\put(4,7){\line(3,-2)3}
\put(1,3){\line(0,1)2}
\put(1,3){\line(1,1)2}
\put(1,3){\line(2,1)4}
\put(3,3){\line(-1,1)2}
\put(3,3){\line(0,1)2}
\put(3,3){\line(2,1)4}
\put(5,3){\line(-2,1)4}
\put(5,3){\line(0,1)2}
\put(5,3){\line(1,1)2}
\put(7,3){\line(-2,1)4}
\put(7,3){\line(-1,1)2}
\put(7,3){\line(0,1)2}
\put(3.85,.3){$0$}
\put(.35,2.85){$a$}
\put(2.35,2.85){$b$}
\put(5.4,2.85){$c$}
\put(7.4,2.85){$d$}
\put(.35,4.85){$d'$}
\put(2.35,4.85){$c'$}
\put(5.4,4.85){$b'$}
\put(7.4,4.85){$a'$}
\put(3.85,7.4){$1$}
\put(3.2,-.75){{\rm Fig.~6}}
\end{picture}
\end{center}

\vspace*{4mm}

$\mathbf P$ is a non-lattice Boolean poset that is not orthomodular since $a\leq d'$, but $a\vee(d'\wedge a')$ is not defined. If we consider the horizontal sum of $\mathbf P$ and some non-distributive generalized orthomodular poset {\rm(}e.g.\ the poset visualized in Fig.~3{\rm)} then we obtain a non-lattice generalized orthomodular poset being not distributive, not a horizontal sum of Boolean posets and not orthomodular.
\end{example}

In what follows we will study generalized orthomodular posets that are horizontal sums of Boolean ones. For this purpose we introduce the compatibility relation analogously as it was done in orthomodular lattices, see e.g.\ \cite K.

In orthomodular lattices $(L,\vee,\wedge)$ the compatibility relation ${\rm C}$ is defined as follows:
\[
a\mathrel{{\rm C}}b\text{ if }a=(a\wedge b)\vee(a\wedge b')
\]
($a,b\in L$). Of course, in a Boolean algebra every two elements are compatible. For our reasons, define the relation ${\rm C}$ in a generalized orthomodular poset $(P,\leq,{}',0,1)$ as follows:
\[
a\mathrel{{\rm C}}b\text{ if }U(a)=U\big(L(a,b),L(a,b')\big)
\]
($a,b\in P$). Then $a,b$ are called {\em compatible} and ${\rm C}$ is called the {\em compatibility relation}.

\begin{lemma}\label{lem1}
Let $(P,\leq,{}',0,1)$ be a generalized orthomodular poset and $a,b\in P$. Then the following hold:
\begin{enumerate}[{\rm(i)}]
\item $a\mathrel{{\rm C}}b$ if and only if $a\mathrel{{\rm C}}b'$,
\item $a\leq b$ implies $a\mathrel{{\rm C}}b$,
\item if $\{a,b\}\cap\{0,1\}\neq\emptyset$ then $a\mathrel{{\rm C}}b$.
\end{enumerate}
\end{lemma}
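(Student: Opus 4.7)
The overall plan is to unfold the defining condition $a\mathrel{{\rm C}}b\Leftrightarrow U(a)=U\bigl(L(a,b),L(a,b')\bigr)$ and exploit two structural facts about cones in a bounded poset with antitone involution: the collapse $UL(a)=U(a)$, and, from the complementation property, the identities $L(x,x')=\{0\}$ and $U(x,x')=\{1\}$.

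For (i) I would simply observe that the right-hand side $U\bigl(L(a,b),L(a,b')\bigr)$ is symmetric in the arguments $L(a,b),L(a,b')$, and that since $'$ is an involution we have $b''=b$; replacing $b$ by $b'$ in the definition therefore yields the same equality, so $a\mathrel{{\rm C}}b\Leftrightarrow a\mathrel{{\rm C}}b'$.

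For (ii) the first step is to note that when $a\le b$ one has $L(a,b)=L(a)$, while $L(a,b')\subseteq L(a)$; hence $L(a,b)\cup L(a,b')=L(a)$, and the right-hand side of the defining condition reduces to $UL(a)$. The standing identity $UL(a)=U(a)$ (because $a\in L(a)$, every upper bound of $L(a)$ bounds $a$ from above, and conversely) finishes the computation.

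For (iii) I would split into the four cases and use (i), (ii) to minimize work. If $b=1$, then $a\le 1=b$ and (ii) applies; if $b=0$, then $b'=1$ and (i) reduces the claim to the previous case. If $a=0$, then $L(0,b)=L(0,b')=\{0\}$, so the right-hand side equals $U(0)=P=U(a)$. The only mildly non-automatic case is $a=1$: here $L(1,b)=L(b)$, $L(1,b')=L(b')$, and I would argue
\[
U\bigl(L(b),L(b')\bigr)=UL(b)\cap UL(b')=U(b)\cap U(b')=U(b,b')=\{1\}=U(1),
\]
where the final equality uses exactly the complementation property of $'$ in the poset sense. There is no real obstacle beyond remembering that "complementation" here means $U(x,x')=\{1\}$ (and dually); the rest is bookkeeping on cones.
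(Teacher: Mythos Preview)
Your proof is correct and follows essentially the same route as the paper's: part (i) is immediate from $b''=b$, part (ii) reduces to $UL(a)=U(a)$ via $L(a,b)=L(a)$, and part (iii) handles the four cases exactly as the paper does (the only cosmetic difference is that for $a=0$ you compute directly whereas the paper simply invokes (ii) with $0\le b$).
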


\begin{proof}
\
\begin{enumerate}[(i)]
\item This is clear.
\item $a\leq b$ implies $U(a)=UL(a)=U\big(L(a),L(a,b')\big)=U\big(L(a,b),L(a,b')\big)$.
\item If $a=0$ or $b=1$ then $a\mathrel{{\rm C}}b$ follows from (ii), if $a=1$ then $a\mathrel{{\rm C}}b$ follows from
\begin{align*}
U(a) & =U(1)=\{1\}=U(b,b')=U\big(L(b),L(b')\big)=U\big(L(1,b),L(1,b')\big)= \\
     & =U\big(L(a,b),L(a,b')\big)
\end{align*}
and if $b=0$ then $a\mathrel{{\rm C}}b$ follows from (i) and (ii).
\end{enumerate}
\end{proof}

The following result is almost evident.

\begin{lemma}\label{lem2}
Let $(B,\leq,{}',0,1)$ be a Boolean poset and $a,b\in B$. Then $a\mathrel{{\rm C}}b$.
\end{lemma}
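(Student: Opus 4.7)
The plan is to prove the equality $U(a)=U\bigl(L(a,b),L(a,b')\bigr)$ directly from the distributive law available in a Boolean poset, by applying the operator $U$ to a suitable identity.

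First I would dispatch the inclusion $U(a)\subseteq U\bigl(L(a,b),L(a,b')\bigr)$. If $x\in U(a)$, then $a\leq x$, so every element of $L(a)$ is $\leq x$; since $L(a,b)\cup L(a,b')\subseteq L(a)$, this gives $x\in U\bigl(L(a,b),L(a,b')\bigr)$. This step uses nothing beyond the definition of the lower/upper cones.

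For the reverse inclusion I would invoke distributivity. Since $b'$ is a complement of $b$ in the Boolean poset, $U(b,b')=\{1\}$, so $L\bigl(U(b,b'),a\bigr)=L(1,a)=L(a)$. Applying the distributive identity
\[
L\bigl(U(b,b'),a\bigr)=LU\bigl(L(b,a),L(b',a)\bigr)
\]
(the first form listed in Section~2, with $x=b$, $y=b'$, $z=a$) then yields $L(a)=LU\bigl(L(a,b),L(a,b')\bigr)$. Taking upper cones of both sides and using the standard identities $UL(a)=U(a)$ (because $a=\max L(a)$) and $ULU(X)=U(X)$ for any $X\subseteq P$, one obtains
\[
U(a)=UL(a)=ULU\bigl(L(a,b),L(a,b')\bigr)=U\bigl(L(a,b),L(a,b')\bigr),
\]
which is precisely $a\mathrel{{\rm C}}b$.

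There is no real obstacle here; the only delicate point is citing the correct form of the distributive law, since the definition of $\mathrel{{\rm C}}$ is phrased in terms of $U$ rather than $L$, and one must remember that $U(b,b')=\{1\}$ follows from $b'$ being a complement of $b$ in the Boolean poset (complements being unique and well-behaved by Lemma~\ref{lem4}). Everything else is a mechanical application of the cone operators.
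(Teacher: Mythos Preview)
Your proof is correct and follows essentially the same route as the paper: both apply distributivity with $x=b$, $y=b'$, $z=a$ together with $U(b,b')=\{1\}$ to obtain $U(a)=U\bigl(L(a,b),L(a,b')\bigr)$. The paper is slightly terser, writing the single chain $U(a)=UL(a)=UL(a,1)=UL\bigl(a,U(b,b')\bigr)=U\bigl(L(a,b),L(a,b')\bigr)$ via the second equivalent form of distributivity, so your separate treatment of the inclusion $U(a)\subseteq U\bigl(L(a,b),L(a,b')\bigr)$ is redundant.
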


\begin{proof}
We have $U(a)=UL(a)=UL(a,1)=UL\big(a,U(b,b')\big)=U\big(L(a,b),L(a,b')\big)$.
\end{proof}

However, we can prove a more interesting and important result.

\begin{theorem}\label{th1}
Let $(P,\leq,{}',0,1)$ be the horizontal sum of the Boolean posets $(B_i,\leq,{}',0,$ $1),i\in I$, and $a,b\in P$. Then the following are equivalent:
\begin{enumerate}[{\rm(i)}]
\item $a\mathrel{{\rm C}}b$,
\item there exists some $i\in I$ with $a,b\in B_i$.
\end{enumerate}
\end{theorem}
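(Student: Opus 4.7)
The proof strategy rests on the fundamental property of horizontal sums: two elements coming from the interiors of different blocks (i.e.\ neither is $0$ nor $1$) are incomparable in $P$, so any common upper or lower bound of such a pair can only be $1$ or $0$, respectively. Equivalently, for every $x \in B_i \setminus \{0,1\}$ one has $L_P(x) = L_{B_i}(x)$ and $U_P(x) = U_{B_i}(x)$. I will reduce the compatibility relation on $P$ to compatibility inside the individual Boolean posets $B_i$, where Lemma~\ref{lem2} already gives the answer.

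For the implication (ii) $\Rightarrow$ (i), first dispose of the case $\{a,b\} \cap \{0,1\} \neq \emptyset$ by Lemma~\ref{lem1}(iii). Otherwise $a,b \in B_i \setminus \{0,1\}$ for some $i$, whence $b' \in B_i \setminus \{0,1\}$ as well. By the observation above, $L_P(a,b) = L_{B_i}(a,b)$ and $L_P(a,b') = L_{B_i}(a,b')$. Lemma~\ref{lem2} applied inside the Boolean poset $B_i$ yields
\[
U_{B_i}(a) = U_{B_i}\bigl(L_{B_i}(a,b),\, L_{B_i}(a,b')\bigr),
\]
which I then transfer to $P$. The subtle point is that $U_P(S) = U_{B_i}(S)$ for $S \subseteq B_i$ only when $S$ contains a nonzero element (since $U_P(\{0\}) = P \neq B_i = U_{B_i}(\{0\})$ as soon as $|I|>1$); but this is guaranteed here, because $L_{B_i}(a,b) \cup L_{B_i}(a,b') = \{0\}$ would force $U_{B_i}(a) = B_i$, hence $a = 0$, contradicting $a \in B_i \setminus \{0,1\}$.

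For (i) $\Rightarrow$ (ii), I argue by contradiction. Suppose no block contains both $a$ and $b$; in particular neither is $0$ nor $1$, and $a \in B_j \setminus \{0,1\}$, $b \in B_i \setminus \{0,1\}$ with $i \neq j$, whence also $b' \in B_i \setminus \{0,1\}$. The fundamental property forces $L_P(a,b) = L_P(a,b') = \{0\}$, and therefore
\[
U_P\bigl(L(a,b),\, L(a,b')\bigr) = U_P(0) = P.
\]
But $U_P(a) \neq P$ because $a \neq 0$, so $a$ and $b$ are not compatible in $P$, contradicting (i).

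I expect the main obstacle to be the bookkeeping in the forward direction: verifying that the outer upper cone in $P$ occurring in the definition of compatibility agrees with the one computed in $B_i$, which fails precisely when the inner lower cones collapse to $\{0\}$. Ruling out that degenerate case (using $a \neq 0$) is the small but genuine technical step; the converse direction is essentially immediate once the block-incomparability observation has been recorded.
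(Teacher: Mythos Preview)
Your proposal is correct and follows essentially the same approach as the paper: both directions are argued just as you describe, with the paper also reducing (ii)$\Rightarrow$(i) to Lemma~\ref{lem2} inside $B_i$ and isolating the key technical point that $L(a,b)\cup L(a,b')\neq\{0\}$ (so that the upper cone computed in $B_i$ agrees with the one in $P$), and handling (i)$\Rightarrow$(ii) by the same contrapositive observation that $L(a,b)=L(a,b')=\{0\}$ forces $U\bigl(L(a,b),L(a,b')\bigr)=P\neq U(a)$. Your explicit framing via $L_P(x)=L_{B_i}(x)$ and $U_P(x)=U_{B_i}(x)$ for interior $x$ is exactly what the paper uses implicitly when it intersects cones with $B_i$.
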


\begin{proof}
$\text{}$ \\
(i) $\Rightarrow$ (ii): \\
If there would exist no $i\in I$ with $a,b\in B_i$ then there would exist $j,k\in I$ with $j\neq k$, $a\in B_j\setminus\{0,1\}$ and $b\in B_k\setminus\{0,1\}$ which would imply
\[
U(a)\neq U(0)=U(0,0)=U\big(L(a,b),L(a,b')\big),
\]
a contradiction. \\
(ii) $\Rightarrow$ (i): \\
If $\{a,b\}\cap\{0,1\}\neq\emptyset$ then $a\mathrel{{\rm C}}b$ according to Lemma~\ref{lem1}. Now assume $a,b\in B_i\setminus\{0,1\}$. Because of Lemma~\ref{lem2} we have
\[
U\big(L(a,b),L(a,b')\big)\cap B_i=U\big(L(a,b)\cap B_i,L(a,b')\cap B_i\big)\cap B_i=U(a)\cap B_i=U(a)
\]
which implies $L(a,b)\cup L(a,b')\neq\{0\}$ and hence
\[
U(a)=U\big(L(a,b),L(a,b')\big)\cap B_i=U\big(L(a,b),L(a,b')\big),
\]
i.e.\ $a\mathrel{{\rm C}}b$.
\end{proof}

For orthomodular lattices $(L,\vee,\wedge)$ the commutator $c(x,y)$ was introduced as follows:
\[
c(x,y):=(x\wedge y)\vee(x\wedge y')\vee(x'\wedge y)\vee(x'\wedge y')
\]
for all $x,y\in L$ (cf.\ e.g.\ \cite K). In generalized orthomodular poset $(P,\leq,{}',0,1)$ we define analogously
\[
c(x,y):=\Min U\big(L(x,y),L(x,y'),L(x',y),L(x',y')\big)
\]
for all $x,y\in P$. Here and in the following $\Min A$ for a subset $A$ of a poset means the set of all minimal elements of $A$, and $\Min U(A)$ means $\Min\big(U(A)\big)$. Let us note that it may happen that $\Min A=\emptyset$ if $A$ has no minimal elements.

In the following we often identify singletons with their unique element.

\begin{lemma}\label{lem3}
Let $(P,\leq,{}',0,1)$ be a generalized orthomodular poset and $a,b\in P$. Then the following hold:
\begin{enumerate}[{\rm(i)}]
\item $c(a,b)=c(b,a)$,
\item $c(a,b)=c(a,b')=c(a',b)=c(a',b')$,
\item $c(0,b)=c(1,b)=1$,
\item if $a\mathrel{{\rm C}}b$ and $a'\mathrel{{\rm C}}b$ then $c(a,b)=1$.
\end{enumerate}
\end{lemma}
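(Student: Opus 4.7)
The plan is to exploit the symmetries built into the definition of $c(x,y)$ for (i) and (ii), to compute one case directly for (iii) and reduce the other to it, and to prove (iv) by factoring a compound upper-cone as the intersection of two simpler ones.

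For (i), observe that $L(x,y)=L(y,x)$ for any $x,y$, so the four-element collection $\{L(a,b),L(a,b'),L(a',b),L(a',b')\}$ is literally the same as $\{L(b,a),L(b,a'),L(b',a),L(b',a')\}$; hence the cones agree and so do their minima. For (ii), I would use the involutivity $a''=a$ (which follows already from the antitone involution being a complementation): substituting $a\mapsto a'$ or $b\mapsto b'$ in the defining expression merely permutes the four sets $L(a,b),L(a,b'),L(a',b),L(a',b')$, so the value of $c$ is unchanged.

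For (iii), I would compute $c(0,b)$ directly. We have $L(0,b)=L(0,b')=\{0\}$, so these terms contribute nothing to the upper cone (since $U(\{0\})=P$). The remaining two sets are $L(1,b)=L(b)$ and $L(1,b')=L(b')$. Any element of $U\bigl(L(b),L(b')\bigr)$ must lie above both $b$ and $b'$ (because $b\in L(b)$ and $b'\in L(b')$), so it lies in $U(b,b')=\{1\}$; conversely $1$ is there trivially. Thus the upper cone equals $\{1\}$, whose minimum is $1$. Then $c(1,b)=c(b,1)=c(b,1')=c(b,0)=c(0,b)=1$ by two applications of (i) and one of (ii).

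For (iv), the key observation is the set-theoretic identity
\[
U\bigl(L(a,b),L(a,b'),L(a',b),L(a',b')\bigr)=U\bigl(L(a,b),L(a,b')\bigr)\cap U\bigl(L(a',b),L(a',b')\bigr),
\]
which holds because $U(X\cup Y)=U(X)\cap U(Y)$. Under the hypotheses $a\mathrel{\mathrm C}b$ and $a'\mathrel{\mathrm C}b$ the two factors equal $U(a)$ and $U(a')$ respectively, so their intersection is $U(a,a')=\{1\}$. Taking the minimum yields $c(a,b)=1$.

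I do not expect a serious obstacle; the arguments are all formal manipulations with the operators $L,U,\Min$ once the right symmetry or factorization is spotted. The only subtle point is (iii), where one must recognise that the two $\{0\}$-summands can be dropped from the upper cone, and that $U(b,b')=\{1\}$ is part of the meaning of $b'$ being a complement of $b$.
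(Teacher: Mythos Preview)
Your proof is correct and follows essentially the same approach as the paper's. The paper also declares (i) and (ii) ``clear'' (from the evident symmetries you spell out), computes (iii) by observing that the four cones reduce to $U(b,b',0)=\{1\}$, and proves (iv) via the same factorization $U(X\cup Y)=U(X)\cap U(Y)$ together with the two compatibility hypotheses; the only cosmetic difference is that the paper invokes (ii) at the outset of (iii) to identify $c(0,b)$ with $c(1,b)$ before the single computation, whereas you compute $c(0,b)$ first and then deduce $c(1,b)$ afterwards.
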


\begin{proof}
\
\begin{enumerate}
\item[(i)] and (ii) are clear.
\item[(iii)] According to (ii) we have
\[
c(0,b)=c(1,b)=\Min U\big(L(1,b),L(1,b'),L(0,b),L(0,b')\big)=\Min U(b,b',0)=1.
\]
\item[(iv)] If $a\mathrel{{\rm C}}b$ and $a'\mathrel{{\rm C}}b$ then
\begin{align*}
c(a,b) & =\Min U\big(L(a,b),L(a,b'),L(a',b),L(a',b')\big)= \\
       & =\Min\Big(U\big(L(a,b),L(a,b')\big)\cap U\big(L(a',b),L(a',b')\big)\Big)=\Min\big(U(a)\cap U(a')\big)= \\
			 & =\Min U(a,a')=1.
\end{align*}
\end{enumerate}
\end{proof}

\begin{corollary}\label{cor1}
Let $(B,\leq,{}',0,1)$ be a Boolean poset and $a,b\in B$. Then $c(a,b)=1$.
\end{corollary}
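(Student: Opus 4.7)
The plan is to obtain this as an immediate consequence of the preceding lemmas, with no new work required. First I would note that by Proposition~\ref{prop2}, the Boolean poset $(B,\leq,{}',0,1)$ is a generalized orthomodular poset, so the commutator $c(a,b)$ is defined in the sense of the paragraph preceding Lemma~\ref{lem3} and all parts of Lemma~\ref{lem3} apply.

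Next I would invoke Lemma~\ref{lem2} twice: since $a,b \in B$, we have $a\mathrel{{\rm C}}b$, and since $a',b \in B$ as well, we also have $a'\mathrel{{\rm C}}b$. With both compatibility relations in hand, Lemma~\ref{lem3}(iv) gives $c(a,b)=1$ at once.

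There is no real obstacle here; the only thing to double-check is that Lemma~\ref{lem2} is being applied to the right pair of elements in the second invocation (namely $a'$ and $b$, not $a$ and $b'$), so that the hypothesis of Lemma~\ref{lem3}(iv) matches exactly. Since $'$ is a well-defined operation on $B$, this is immediate.
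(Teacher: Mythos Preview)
Your proposal is correct and matches the paper's proof essentially line for line: the paper also invokes Lemma~\ref{lem2} to obtain $a\mathrel{{\rm C}}b$ and $a'\mathrel{{\rm C}}b$, then applies Lemma~\ref{lem3}(iv) to conclude $c(a,b)=1$. The only extra you add is the explicit appeal to Proposition~\ref{prop2} to ensure the commutator is defined, which the paper leaves implicit.
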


\begin{proof}
We have $a,a',b\in B$ and hence $a\mathrel{{\rm C}}b$ and $a'\mathrel{{\rm C}}b$ according to Lemma~\ref{lem2} which implies $c(a,b)=1$ by Lemma~\ref{lem3}.
\end{proof}

Now we prove a result similar to Theorem~\ref{th1} for the commutator instead of compatibility.

\begin{theorem}\label{th2}
Let $(P,\leq,{}',0,1)$ be the horizontal sum of the Boolean posets $(B_i,\leq,{}',0,1),$ $i\in I$, and $a,b\in P$. Then
\[
c(a,b)=\left\{
\begin{array}{ll}
1 & \text{if there exists some }i\in I\text{ with }a,b\in B_i \\
0 & \text{otherwise}.
\end{array}
\right.
\]
Hence $c(a,b)=1$ if and only if there exists some $i\in I$ with $a,b\in B_i$.
\end{theorem}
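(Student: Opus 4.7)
The plan is to split according to whether $a$ and $b$ lie in a common Boolean summand $B_i$, and to reduce each direction to results already proved in the paper.

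In the case that $a, b \in B_i$ for some $i \in I$, the slick route is via Lemma~\ref{lem3}(iv): it suffices to exhibit both $a \mathrel{{\rm C}} b$ and $a' \mathrel{{\rm C}} b$. Since each $B_i$ is closed under the antitone involution $'$, we automatically have $a' \in B_i$ as well. Applying the implication (ii) $\Rightarrow$ (i) of Theorem~\ref{th1} to the pairs $(a,b)$ and $(a',b)$ yields exactly these two compatibilities, whereupon Lemma~\ref{lem3}(iv) delivers $c(a,b) = 1$.

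In the case that no such $i$ exists, first note that this forces $a, b \notin \{0,1\}$, since $0$ and $1$ lie in every summand. Hence there are distinct indices $j, k \in I$ with $a \in B_j \setminus \{0,1\}$ and $b \in B_k \setminus \{0,1\}$; the complements $a', b'$ lie in the same summands and are likewise nontrivial. The defining property of the horizontal sum is that any comparability between elements from distinct summands passes through $0$ or $1$, so any common lower bound of an element of $B_j \setminus \{0,1\}$ and an element of $B_k \setminus \{0,1\}$ must lie in $B_j \cap B_k = \{0,1\}$ and cannot equal $1$. Therefore each of $L(a,b)$, $L(a,b')$, $L(a',b)$, $L(a',b')$ equals $\{0\}$, so
\[
U\big(L(a,b),L(a,b'),L(a',b),L(a',b')\big) = U(0) = P,
\]
and $\Min P = \{0\}$, giving $c(a,b) = 0$.

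The final "if and only if" clause is then immediate from the two cases together with $0 \neq 1$. The only bookkeeping point is the cross-summand computation in the second case; once one observes that a nonzero element below $a \in B_j \setminus \{0,1\}$ must itself sit in $B_j$, the four pairwise lower cones collapse to $\{0\}$ and the rest is automatic.
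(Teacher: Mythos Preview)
Your argument is correct. The second case (no common $B_i$) is essentially identical to the paper's: both observe that $a\in B_j\setminus\{0,1\}$ and $b\in B_k\setminus\{0,1\}$ with $j\neq k$ force all four lower cones to be $\{0\}$, whence $c(a,b)=\Min U(0)=0$.

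In the first case your route differs from the paper's. You invoke Theorem~\ref{th1} (ii)$\Rightarrow$(i) twice to obtain $a\mathrel{{\rm C}}b$ and $a'\mathrel{{\rm C}}b$, and then apply Lemma~\ref{lem3}(iv) to conclude $c(a,b)=1$. The paper instead splits off the trivial subcase $\{a,b\}\cap\{0,1\}\neq\emptyset$ via Lemma~\ref{lem3}(iii), and for $a,b\in B_i\setminus\{0,1\}$ appeals to Corollary~\ref{cor1} together with an explicit computation showing that $U\big(L(a,b),L(a,b'),L(a',b),L(a',b')\big)$, taken in $P$, actually coincides with its intersection with $B_i$ (because the union of the four lower cones contains a nonzero element of $B_i$), and that this intersection is $\{1\}$. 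Your approach is shorter and cleaner: it packages exactly that computation inside the already-proved Theorem~\ref{th1}, so no second pass through the $U$/$L$ bookkeeping is needed. The paper's approach, on the other hand, keeps the proof of Theorem~\ref{th2} parallel in structure to that of Theorem~\ref{th1} and makes the cone calculation for the commutator explicit rather than hidden behind the compatibility relation.
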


\begin{proof}
First assume there exists some $i\in I$ with $a,b\in B_i$. If $\{a,b\}\cap\{0,1\}\neq\emptyset$ then $c(a,b)=1$ according to Lemma~\ref{lem3}. Now assume $a,b\in B_i\setminus\{0,1\}$. Because of Corollary~\ref{cor1} we have
\begin{align*}
& U\big(L(a,b)\cup L(a,b')\cup L(a',b)\cup L(a',b')\big)\cap B_i= \\
& =U\big(L(a,b)\cap B_i,L(a,b')\cap B_i,L(a',b)\cap B_i,L(a',b')\cap B_i\big)\cap B_i=1
\end{align*}
which implies $L(a,b)\cup L(a,b')\cup L(a',b)\cup L(a',b')\neq0$ and hence
\begin{align*}
c(a,b) & =\Min U\big(L(a,b),L(a,b'),L(a',b),L(a',b')\big)= \\
       & =\Min\Big(U\big(L(a,b)\cup L(a,b')\cup L(a',b)\cup L(a',b')\big)\cap B_i\Big)=1.
\end{align*}
Conversely, assume there exists no $i\in I$ with $a,b\in B_i$. Then there exist $j,k\in I$ with $j\neq k$, $a\in B_j\setminus\{0,1\}$ and $b\in B_k\setminus\{0,1\}$ and hence
\[
c(a,b)=\Min U\big(L(a,b),L(a,b'),L(a',b),L(a',b')\big)=\Min U(0)=0.
\]
\end{proof}

It is worth noticing that the assumptions of Theorem~\ref{th2} are essential. Namely if the generalized orthomodular poset $(P,\leq,{}',0,1)$ is neither Boolean nor a horizontal sum of such posets then for $x,y\in P$ it may happen that $c(x,y)$ differs from both $0$ and $1$, see the following example.

\begin{example}
\
\begin{enumerate}[{\rm(i)}]
\item Consider the orthomodular poset $(P,\leq,{}',0,1)$ depicted in Fig.~3. Then we compute
\begin{align*}
c(a,b) & =\Min U\big(L(a,b),L(a,b'),L(a',b),L(a',b')\big)=\Min U(0,0,0,0,g,h\})= \\
       & =\Min U(g,h)=\{a',b'\}
\end{align*}
which differs from both $0$ and $1$.
\item However, the condition from Theorem~\ref{th2} does not characterize the class of generalized orthomodular posets that are horizontal sums of Boolean posets. For example, consider the ortholattice $\mathbf O_6=(O_6,\leq,{}',0,1)$ visualized in Fig.~7:

\vspace*{-3mm}

\begin{center}
\setlength{\unitlength}{7mm}
\begin{picture}(4,8)
\put(2,1){\circle*{.3}}
\put(1,3){\circle*{.3}}
\put(3,3){\circle*{.3}}
\put(1,5){\circle*{.3}}
\put(3,5){\circle*{.3}}
\put(2,7){\circle*{.3}}
\put(2,1){\line(-1,2)1}
\put(2,1){\line(1,2)1}
\put(1,5){\line(0,-1)2}
\put(1,5){\line(1,2)1}
\put(3,5){\line(0,-1)2}
\put(3,5){\line(-1,2)1}
\put(1.85,.3){$0$}
\put(.35,2.85){$a$}
\put(3.4,2.85){$b$}
\put(.35,4.85){$b'$}
\put(3.4,4.85){$a'$}
\put(1.85,7.4){$1$}
\put(1.2,-.75){{\rm Fig.~7}}
\end{picture}
\end{center}

\vspace*{4mm}

One can easily check that $c(x,y)=1$ for all $x,y\in O_6$ {\rm(}if we define the commutator in ortholattices in the same way as it was done for orthomodular lattices{\rm)}. Of course, this lattice is not a horizontal sum of Boolean posets, but it is also not a generalized orthomodular poset.
\end{enumerate}
\end{example}

On the other hand, for arbitrary generalized orthomodular posets we can prove the following result.

\begin{proposition}
Let $(P,\leq,{}',0,1)$ be a generalized orthomodular poset. Then the following are equivalent:
\begin{enumerate}[{\rm(i)}]
\item $c(x,y)\in\{0,1\}$ for all $x,y\in P$,
\item If $x,y\in P$ then either $L(x,y)=L(x,y')=L(x',y)=L(x',y')=0$ or \\
$U\big(L(x,y),L(x,y'),L(x',y),L(x',y')\big)=1$.
\end{enumerate}
\end{proposition}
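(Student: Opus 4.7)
The plan is to unpack the definition
\[
c(x,y) = \Min U\big(L(x,y),L(x,y'),L(x',y),L(x',y')\big)
\]
and match each of the two values $0,1$ against one of the two disjuncts of (ii). Write $\mathcal L := L(x,y) \cup L(x,y') \cup L(x',y) \cup L(x',y')$, so that the upper cone appearing in the definition of $c(x,y)$ equals $U(\mathcal L)$. Note that $0\in\mathcal L$ (since $0$ lies in every lower cone) and $1\in U(\mathcal L)$ (since $P$ is bounded above), so $U(\mathcal L)$ is never empty.

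For (i) $\Rightarrow$ (ii), I split on the value of $c(x,y)$. If $c(x,y) = 0$, i.e.\ $\Min U(\mathcal L) = \{0\}$, then $0 \in U(\mathcal L)$, so $0$ is an upper bound of $\mathcal L$; this forces every element of each of the four lower cones to be $\leq 0$ and hence equal to $0$, which is exactly the first disjunct of (ii). If instead $c(x,y) = 1$, i.e.\ $\Min U(\mathcal L) = \{1\}$, the key observation is that in any bounded poset, $1$ being a minimal element of the upper cone $U(\mathcal L)$ is equivalent to $U(\mathcal L) = \{1\}$: every $z \in U(\mathcal L)$ satisfies $z \leq 1$, and if $z \neq 1$ then $z < 1$, contradicting minimality of $1$. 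Hence $U(\mathcal L) = \{1\}$, giving the second disjunct of (ii).

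For (ii) $\Rightarrow$ (i), each disjunct is handled directly. If all four lower cones equal $\{0\}$, then $\mathcal L = \{0\}$ and $U(\mathcal L) = U(0) = P$, whose unique minimal element is $0$, so $c(x,y) = 0$. If instead $U(\mathcal L) = \{1\}$, then trivially $\Min U(\mathcal L) = \{1\}$, so $c(x,y) = 1$.

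The argument is essentially a translation of the definition, and I do not anticipate any real obstacle. The one point worth stating cleanly before starting is the small auxiliary fact that, in any bounded poset, $\Min U(A) = \{1\}$ is equivalent to $U(A) = \{1\}$; once this is in hand, both directions collapse to routine manipulations of lower and upper cones, and the paper's convention of identifying singletons with their unique element (so that ``$L(x,y) = 0$'' means ``$L(x,y) = \{0\}$'') makes the formulas line up with those in the statement. Notably, the hypothesis that $(P,\leq,{}',0,1)$ is a generalized orthomodular poset is not used anywhere in the argument; only the boundedness of $P$ and the existence of a unary operation $'$ (needed to form $x'$ and $y'$) enter.
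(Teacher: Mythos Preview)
Your proof is correct and follows essentially the same approach as the paper: both arguments unpack the definition of $c(x,y)$ and reduce each of the two possible values $0$ and $1$ to one of the disjuncts in (ii), using the elementary observations that $0\in U(\mathcal L)$ iff $\mathcal L=\{0\}$ and that $\Min U(\mathcal L)=\{1\}$ iff $U(\mathcal L)=\{1\}$. The paper merely phrases this as two chains of equivalences rather than splitting into the two implications, and your additional remark that the generalized-orthomodularity hypothesis is nowhere used is a correct observation not made explicit in the paper.
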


\begin{proof}
Let $a,b\in P$. Then the following are equivalent:
\begin{align*}
                                         c(a,b) & =0, \\
\Min U\big(L(a,b),L(a,b'),L(a',b),L(a',b')\big) & =0, \\
                                              0 & \in U\big(L(a,b),L(a,b'),L(a',b),L(a',b')\big), \\
    L(a,b)\cup L(a,b')\cup L(a',b)\cup L(a',b') & =0, \\
                L(a,b)=L(a,b')=L(a',b)=L(a',b') & =0.
\end{align*}
Moreover the following are equivalent:
\begin{align*}
                                         c(a,b) & =1, \\
\Min U\big(L(a,b),L(a,b'),L(a',b),L(a',b')\big) & =1, \\
     U\big(L(a,b),L(a,b'),L(a',b),L(a',b')\big) & =1.
\end{align*}
\end{proof}

The mutual relationship between the compatibility relation and the commutator is expressed in the following result.

\begin{corollary}
If $(P,\leq,{}',0,1)$ is a horizontal sum of Boolean posets and $a,b\in P$ then $a\mathrel{{\rm C}}b$ if and only if $c(a,b)=1$.
\end{corollary}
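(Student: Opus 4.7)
The statement is an immediate combination of Theorem~\ref{th1} and Theorem~\ref{th2}, both of which characterize a property of the pair $(a,b)$ by exactly the same condition, namely that there exists some $i \in I$ with $a,b \in B_i$. So my plan is simply to chain these two equivalences.

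More precisely, I would argue as follows. Fix $a,b \in P$. By Theorem~\ref{th1}, the relation $a \mathrel{{\rm C}} b$ holds if and only if there exists some $i \in I$ with $a,b \in B_i$. By Theorem~\ref{th2}, the equality $c(a,b)=1$ holds if and only if there exists some $i \in I$ with $a,b \in B_i$. Composing these two equivalences yields $a \mathrel{{\rm C}} b$ iff $c(a,b)=1$, which is precisely the claim.

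There is essentially no obstacle here, since the nontrivial content has already been established in the two cited theorems. The only thing to double-check is that the quantifier ``there exists some $i \in I$ with $a,b \in B_i$'' really is literally the same in both statements (it is, after inspecting the statements of Theorem~\ref{th1} and Theorem~\ref{th2}), so one does not need to invoke any further property of horizontal sums, compatibility, or the commutator to bridge the two. For this reason a very short proof, citing both theorems and concluding by transitivity of ``iff'', should suffice; no case analysis on whether $\{a,b\}\cap\{0,1\}$ is empty is needed, because those boundary cases have already been absorbed into Theorems~\ref{th1} and~\ref{th2}.
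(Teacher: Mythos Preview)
Your proposal is correct and matches the paper's own proof, which simply states that the result follows from Theorems~\ref{th1} and~\ref{th2}. Your explicit chaining of the two equivalences through the common condition ``there exists some $i\in I$ with $a,b\in B_i$'' is exactly the intended argument.
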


\begin{proof}
This follows from Theorems~\ref{th1} and \ref{th2}.
\end{proof}

For the next theorem we extend the notion of the commutator from elements to subsets. For a generalized orthomodular poset $(P,\leq,{}',0,1)$ and subsets $A$ and $B$ of $P$ we define
\[
c(A,B):=\bigcup_{a\in A,b\in B}c(a,b).
\]

\begin{corollary}
The class of generalized orthomodular posets that are horizontal sums of Boolean posets satisfies the identity $c(c(x,y),z)\approx1$.
\end{corollary}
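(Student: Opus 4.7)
The plan is to reduce the identity to a direct computation using two results already proved: Theorem~\ref{th2}, which pins down the possible values of $c(x,y)$ in a horizontal sum of Boolean posets, and Lemma~\ref{lem3}(iii), which evaluates the commutator when one of its arguments is $0$ or $1$. Since the subset-commutator is defined by $c(A,B)=\bigcup_{a\in A,b\in B}c(a,b)$, the expression $c(c(x,y),z)$ just indexes one commutator computation over the elements of $c(x,y)$; so everything will hinge on the fact that, in our setting, $c(x,y)$ is a singleton consisting of $0$ or $1$.

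Concretely, I would fix an arbitrary horizontal sum $\mathbf P=(P,\leq,{}',0,1)$ of Boolean posets $(B_i,\leq,{}',0,1)$, $i\in I$, and arbitrary $x,y,z\in P$. By Theorem~\ref{th2}, the value $c(x,y)$ equals $1$ if some $B_i$ contains both $x$ and $y$, and equals $0$ otherwise; in either case it is a single element of $\{0,1\}\subseteq P$. Substituting this into the definition of the subset commutator,
\[
c(c(x,y),z)=\bigcup_{u\in c(x,y)}c(u,z),
\]
the union collapses to the single term $c(0,z)$ or $c(1,z)$. By Lemma~\ref{lem3}(iii), both of these equal $1$, so $c(c(x,y),z)=1$, as claimed.

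There is essentially no obstacle here; the only point demanding minor care is the bookkeeping convention, stated just before Lemma~\ref{lem3}, that singletons are identified with their unique element. One should verify that this identification is consistent when we feed the output of $c(x,y)$ back into $c(\cdot,z)$: Theorem~\ref{th2} guarantees that $c(x,y)$ is indeed a singleton in the horizontal sum case, so the substitution is legitimate, and the identity $c(c(x,y),z)\approx 1$ holds for all $x,y,z$.
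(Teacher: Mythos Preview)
Your proof is correct and follows exactly the paper's approach: invoke Theorem~\ref{th2} to get $c(x,y)\in\{0,1\}$, then apply Lemma~\ref{lem3}(iii) to obtain $c(0,z)=c(1,z)=1$. Your extra care about the singleton convention and the subset-commutator definition is harmless but not needed here, since the paper's identification of singletons with their elements already handles the bookkeeping.
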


\begin{proof}
We have $c(x,y)\in\{0,1\}$ according to Theorem~\ref{th2} and $c(0,z)\approx c(1,z)\approx1$ according to Lemma~\ref{lem3}.
\end{proof}

Let $(P,\leq,{}',0,1)$ be a generalized orthomodular poset and $A\subseteq P$. Then we put $A':=\{x'\mid x\in A\}$ and define
\[
t(x,y,z):=\Min U\bigg(L\Big(\big(c(x,y)\big)',x\Big),L\big(c(x,y),z\big)\bigg)
\]
for all $x,y,z\in P$.

The next theorem shows that $t$ behaves on horizontal sums of Boolean posets similarly as the ternary discriminator.

\begin{theorem}
Let $(P,\leq,{}',0,1)$ be a generalized orthomodular poset that is a horizontal sum of Boolean posets and $a,b,c\in P$. Then
\[
t(a,b,c)=\left\{
\begin{array}{ll}
c & \text{if }a\mathrel{{\rm C}}b \\
a & \text{otherwise}.
\end{array}
\right.
\]
\end{theorem}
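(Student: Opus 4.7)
The plan is a case analysis on the value of the commutator $c(a,b)$, combining Theorems~\ref{th1} and~\ref{th2} to reduce the claim to two trivial evaluations of $t$.

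By Theorem~\ref{th2}, $c(a,b)\in\{0,1\}$, with $c(a,b)=1$ exactly when some component $B_i$ contains both $a$ and $b$; by Theorem~\ref{th1}, the latter is equivalent to $a\mathrel{{\rm C}}b$. Hence the dichotomy in the statement is precisely the dichotomy $c(a,b)=1$ versus $c(a,b)=0$, and it remains only to compute $t(a,b,c)$ in each of these two situations.

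Suppose first that $a\mathrel{{\rm C}}b$, so $c(a,b)=1$ and, by involutivity, $\bigl(c(a,b)\bigr)'=0$. Substituting into the definition of $t$ and using $L(0,a)=\{0\}$, $L(1,c)=L(c)$, and $0\in L(c)$ (so that adjoining $0$ does not enlarge the union), I would write
$$t(a,b,c)=\Min U\big(L(0,a),L(1,c)\big)=\Min U\big(\{0\}\cup L(c)\big)=\Min UL(c).$$
The general identity $UL(c)=U(c)$ and the observation that $c$ is the least element of $U(c)$ then give $t(a,b,c)=c$. The case $a\not\mathrel{{\rm C}}b$ is handled symmetrically: here $c(a,b)=0$ and $\bigl(c(a,b)\bigr)'=1$, so $t(a,b,c)=\Min U\big(L(a),\{0\}\big)=\Min UL(a)=a$.

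I do not foresee any genuine obstacle; the proof is a direct unwinding of definitions once the two preceding structural theorems on horizontal sums are applied. The only point requiring minor care is the paper's convention of identifying singletons with their unique element, which is what allows $\bigl(c(a,b)\bigr)'$ to be interpreted as $0$ or $1$ in each case.
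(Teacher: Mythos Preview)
Your proof is correct and follows essentially the same route as the paper: a case split on $c(a,b)\in\{0,1\}$ followed by a direct evaluation of $t$ in each case. You are slightly more careful than the paper in invoking both Theorem~\ref{th1} and Theorem~\ref{th2} to link $a\mathrel{{\rm C}}b$ with $c(a,b)=1$ (the paper cites only Theorem~\ref{th2} here, implicitly relying on the preceding corollary), and in spelling out the identities $UL(c)=U(c)$ and $\Min U(c)=c$; otherwise the arguments coincide.
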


\begin{proof}
If $a\mathrel{{\rm C}}b$ then according to Theorem~\ref{th2} we have $c(a,b)=1$ and hence
\begin{align*}
t(a,b,c) & =\Min U\bigg(L\Big(\big(c(a,b)\big)',a\Big),L\big(c(a,b),c\big)\bigg)=\Min U\big(L(0,a),L(1,c)\big)= \\
         & =\Min U(0,c)=c.
\end{align*}
Otherwise, according to Theorem~\ref{th2} we have $c(a,b)=0$ and hence
\begin{align*}
t(a,b,c) & =\Min U\bigg(L\Big(\big(c(a,b)\big)',a\Big),L\big(c(a,b),c\big)\bigg)=\Min U\big(L(1,a),L(0,c)\big)= \\
         & =\Min U(a,0)=a.
\end{align*}
\end{proof}

Authors' addresses:

Ivan Chajda \\
Palack\'y University Olomouc \\
Faculty of Science \\
Department of Algebra and Geometry \\
17.\ listopadu 12 \\
771 46 Olomouc \\
Czech Republic \\
ivan.chajda@upol.cz

Miroslav Kola\v r\'ik \\
Palack\'y University Olomouc \\
Faculty of Science \\
Department of Computer Science \\
17.\ listopadu 12 \\
771 46 Olomouc \\
Czech Republic \\
miroslav.kolarik@upol.cz

Helmut L\"anger \\
TU Wien \\
Faculty of Mathematics and Geoinformation \\
Institute of Discrete Mathematics and Geometry \\
Wiedner Hauptstra\ss e 8-10 \\
1040 Vienna \\
Austria, and \\
Palack\'y University Olomouc \\
Faculty of Science \\
Department of Algebra and Geometry \\
17.\ listopadu 12 \\
771 46 Olomouc \\
Czech Republic \\
helmut.laenger@tuwien.ac.at
\end{document}